\numberwithin{equation}{section}
\numberwithin{figure}{section}
\newtheorem{theorem}{Theorem}[section]
\newtheorem{conjecture}[theorem]{Conjecture}
\newtheorem{definition}[theorem]{Definition}
\newtheorem{lemma}[theorem]{Lemma}
\newtheorem{corollary}[theorem]{Corollary}
\DeclareMathOperator{\IM}{Im}
\DeclareMathOperator{\LT}{\textup{\textsc{lt}}}
\DeclareMathOperator{\spn}{span}
\DeclareMathOperator{\cf}{cf}
\newcommand{\cfx}[2]{\cf_{K[y]}(#1,x^{#2})}
\newcommand{\id}{{\rm id}}
\begin{document}
\title[Images of locally finite $\mathcal{E}$-derivations]{Images of locally finite $\mathcal{E}$-derivations of bivariate polynomial algebras}
\author{Hongyu Jia, Xiankun Du and Haifeng Tian}
\address{X. Du: School of Mathematics,  Jilin university, Changchun 130012, P. R. China}
\email{duxk@jlu.edu.cn}
\address{H. Jia: School of Mathematics,  Jilin university, Changchun 130012, P. R. China}
\email{jiahy1995@163.com}
\address{H. Tian: School of Mathematics,  Jilin university, Changchun 130012, P. R. China}
\email{tianhf1010@foxmail.com}
\begin{abstract}
This paper presents an $\mathcal{E}$-derivation analogue of a result on
derivations due to van den Essen, Wright and Zhao. We prove that the image
of a locally finite $K$-$\mathcal{E}$-derivation of polynomial algebras in two
variables over a field $K$ of characteristic zero is a Mathieu subspace.
This result together with that of van den Essen, Wright and Zhao confirms the
LFED conjecture in the case of polynomial algebras in two variables.
\end{abstract}
\subjclass[2010]{14R10}
\keywords{$\mathcal{E}$-derivation; LFED conjecture;  locally finite;
Mathieu subspace; polynomial endomorphism.}
\thanks{Corresponding author: Haifeng Tian}
\maketitle

\section{Introduction}

Images of derivations have been studied recently by several authors because of close relationship with the Jacobian conjecture. Let $K$ denote a field
of characteristic $0$. It is proved in~\cite{vanden2010} that the Jacobian
conjecture for $K[x,y]$ is equivalent to the statement that the image $\IM D$
is a Mathieu subspace of $K[x,y]$ for any $K$-derivation $D$ of $K[x,y]$ such
that $1\in\IM D$ and $\operatorname{div}D =0$, where $\operatorname{div}D
=\partial_{x}D(x)+\partial_{y}D(y) $. The image of a $K$-derivation $D$ with
$\operatorname{div}D=0$ of $K[x,y]$ does not need to be a Mathieu subspace (if $1 \notin \IM D$)~\cite{sun2017}. The Jacobian conjecture for $K[x,y]$ can also be reformulated as follows: a $K$-derivation $D$ of $K[x,y]$ which satisfies $1\in\IM D$ and
$\operatorname{div}D=0$ is locally finite~\cite[Conjecture 9.8.2]{now1994}. For locally finite derivations, van den Essen, Wright and Zhao proved the following result.

\begin{theorem}
\,\cite[Theorem 3.1]{vanden2010}\label{thewz} Let $K$ be a field of
characteristic $0$ and let $D$ be any locally finite $K$-derivation of
$K[x,y]$. Then $\IM D$ is a Mathieu subspace of $K[x,y]$.
\end{theorem}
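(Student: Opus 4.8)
The plan is to exploit the additive Jordan--Chevalley decomposition $D = S + N$ of a locally finite derivation into its commuting semisimple and locally nilpotent parts, both of which are again $K$-derivations of $K[x,y]$. Since conjugating $D$ by a polynomial automorphism $\phi$ replaces $\IM D$ by $\phi(\IM D)$, and since the Mathieu subspace property is preserved under algebra automorphisms and descends along extensions of the base field, I may pass to any convenient conjugate of $D$ and, after extending scalars to a finite extension $K'\supseteq K$, assume that $S$ is linearised to the diagonal form $S = \lambda x\partial_x + \mu y\partial_y$ with $\lambda,\mu\in K'$ (semisimple derivations of $K[x,y]$ being conjugate to diagonal linear ones). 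The operator $S$ then induces an eigenspace grading $K[x,y] = \bigoplus_{c} E_c$, where $E_c$ is spanned by the monomials $x^iy^j$ with $\lambda i + \mu j = c$; because $[S,N]=0$, the derivation $N$ preserves each $E_c$, so $D$ is graded and $\IM D = \bigl(\bigoplus_{c\neq 0} E_c\bigr)\oplus \IM\!\left(N|_{E_0}\right)$, where $E_0 = \ker S$ is a subalgebra.

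First I would record that on each $E_c$ with $c\neq 0$ the operator $D|_{E_c} = c\cdot\id + N|_{E_c}$ is bijective: although $E_c$ is infinite dimensional, $N$ is locally nilpotent, so the Neumann series $\frac1c\sum_{j\ge 0}(-c)^{-j}(N|_{E_c})^{j}$ terminates on each element and inverts $D|_{E_c}$. Hence $\bigoplus_{c\neq0}E_c \subseteq \IM D$, and the whole question concentrates on the piece $E_0=\ker S$. I would then run through the short list of possibilities for $\ker S$ afforded by linearisation: if $\ker S = K[x,y]$ then $S=0$, $D=N$ is locally nilpotent, Rentschler's theorem puts $D$ in the form $p(x)\partial_y$, and $\IM D = p(x)K[x,y]$ is an ideal; if $\ker S = K$ then $\IM D$ is the augmentation ideal $(x,y)$; and if $\ker S = K[u]$ for $u$ a coordinate (up to automorphism) then the complement of $\IM D$ is the principal ideal generated by the other coordinate. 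In each of these cases $\IM D$ is an ideal, or all of $K[x,y]$ once $N|_{E_0}\neq 0$, and is therefore automatically a Mathieu subspace.

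The one genuinely hard case, and the main obstacle, is when $\ker S = K[w]$ for a \emph{monomial} $w = x^{p}y^{q}$ with $p,q\ge 1$ coprime, i.e.\ when $\lambda/\mu$ is a negative rational. Here $N|_{E_0}$ is a locally nilpotent derivation of the one-variable algebra $K[w]$, hence either surjective (giving $\IM D = K[x,y]$, which is trivial) or zero, so I may assume $\IM D = \bigoplus_{d\neq 0} R_d$, the span of the monomials off the kernel line, where $R_d$ is the component of $d$-degree $d := qi - pj$. To prove this is Mathieu I would linearise the grading: choosing a monomial $t=x^{i_0}y^{j_0}$ with $qi_0 - pj_0 = 1$, the pair $(w,t)$ gives a unimodular change of the exponent lattice, so $K[x,y]$ embeds in the Laurent ring $L = K(w)[t,t^{-1}]$ and the projection onto $\ker S$ becomes extraction of the $t^{0}$-coefficient. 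The hypothesis $f^m\in\IM D$ for all $m$ then reads $[t^0](f^m)=0$ for all $m\ge1$, and the one-variable Duistermaat--van der Kallen theorem---that the Laurent polynomials of vanishing constant term form a Mathieu subspace of $K(w)[t,t^{-1}]$---yields $[t^0](gf^m)=0$, i.e.\ $gf^m\in\IM D$, for every $g$ and all $m\gg 0$. Descending from $L$ back to $K[x,y]$ and undoing the scalar extension finishes the argument. I expect the delicate bookkeeping points to be checking that the semisimple part really linearises to a diagonal monomial grading over a finite extension and that the Mathieu property descends along that extension; the analytic substance of the proof is carried entirely by the Duistermaat--van der Kallen input.
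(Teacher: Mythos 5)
You should note at the outset that this paper never proves Theorem~\ref{thewz}: it is imported verbatim from \cite[Theorem 3.1]{vanden2010} and used as a black box, so the only meaningful comparison is with the proof in that cited source. Measured against it, your proposal is essentially a correct reconstruction of the van den Essen--Wright--Zhao argument: the Jordan--Chevalley decomposition $D=S+N$ into commuting semisimple and locally nilpotent derivations, diagonalization of $S$ after enlarging the base field, the eigenspace grading giving $\IM D=\left(\oplus_{c\neq0}E_c\right)\oplus N(E_0)$ with each $D|_{E_c}$ ($c\neq 0$) inverted by a terminating Neumann series, the easy kernels $K[x,y]$, $K$, $K[u]$ (Rentschler plus the observation that the image is then an ideal or everything), and, for the hard kernel $K[x^py^q]$, a unimodular change of the exponent lattice embedding $K[x,y]$ into $K(w)[t,t^{-1}]$ so that the one-variable Duistermaat--van der Kallen theorem \cite{duikal} finishes the job. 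That is the architecture of the original proof, with the analytic content carried by the same external input.

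A few points need repair, all minor. First, a \emph{finite} extension of $K$ is not enough: linearization of a semisimple derivation of the plane (equivalently, of a split torus action on the affine plane) is a theorem over algebraically closed fields, so you should pass to $\bar K$ or $\mathbb{C}$; this costs nothing, since the Mathieu property descends along arbitrary field extensions (Lemma~\ref{lemtensor} above, which is \cite[Lemma 2.5]{vanden2010}). Both this linearization theorem and the fact that the semisimple and locally nilpotent parts of a locally finite derivation are again derivations are genuine theorems that must be cited, not asserted in passing. Second, in the case $\ker S=K[u]$ you have the roles reversed: $\IM D$ itself is the principal ideal generated by the other coordinate (or all of $K[x,y]$ when $N|_{E_0}\neq0$), and its graded complement is $\ker S$; as stated, ``the complement of $\IM D$ is the principal ideal'' is false, though what you clearly intend is correct. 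Third, Duistermaat--van der Kallen is invoked over the non-algebraically-closed field $K(w)$, which requires the standard reduction of characteristic-zero base fields to $\mathbb{C}$ (the same device as Lemma~\ref{LFEDC}); you flag this, and it is routine. With these adjustments your sketch is a faithful, correct rendering of the cited proof rather than a new route to the theorem.
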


The notion of Mathieu subspaces was introduced by Zhao in~\cite{zhao2010},
based on study on the Jacobian conjecture
 and inspired
by the Mathieu conjecture that implies the Jacobian conjecture
~\cite{mathieu1997}. The concept was subsequently extended to noncommutative algebras in
~\cite{zhao2012}. Mathieu subspaces are also called Mathieu-Zhao subspaces
as suggested by van den Essen~\cite{essen2014}. The
general references for Mathieu subspaces are~\cite{db,essenbk21,zhao2012}.

In this paper we will consider only the commutative case.

\begin{definition} \label{mz}
Let $A$ be a commutative $K$-algebra. A subspace $M$ of $A$ is called a
Mathieu subspace (or Mathieu-Zhao subspace) of $A$, if for all $f\in M$ such that $f^{m}\in M$ for all $m>0$
and for all $g\in A$, there exists an integer $n$ (depending on $f$ and $g$) such
that $f^{m}g\in M$ for all $m>n$.
\end{definition}

It is clear that ideals are Mathieu subspaces. Unlike ideals, Mathieu subspaces even in
 univariate polynomial algebras are not completely determined yet (see
~\cite{vanden2012,vanden2016}).

Various facts and problems in affine algebraic geometry are
related to Mathieu subspaces. A key issue is to prove that kernels and images
of some special linear maps such as derivations and more general differential
operators are Mathieu subspaces, though the verification of a
Mathieu subspace is generally difficult (see~\cite{essen2020} and~\cite[Ch.~5]{essenbk21},  and the references given there). Among others, $\mathcal{E}
$-derivations were also considered by Zhao~\cite{zhao2018}. A $K$-$\mathcal{E}
$-derivation of a $K$-algebra is a linear map $\delta$ such that $\id-\delta$
is an algebra homomorphism. Zhao formulated the following conjecture for
general associative algebras~\cite{zhao2018}, though we focus on the case of
polynomial algebras in this paper.

\begin{conjecture}
[The LFED Conjecture]Let $K$ be a field of characteristic zero. Images of
locally finite $K$-derivations and $K$-$\mathcal{E}$-derivations of
$K$-algebras are Mathieu subspaces.
\end{conjecture}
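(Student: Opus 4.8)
Since the statement is the full LFED conjecture for arbitrary $K$-algebras, rather than a finished proof I will lay out a reduction strategy, split it into the derivation and the $\mathcal{E}$-derivation halves, and isolate the single obstruction that keeps the general case open. The common engine is the \emph{weight decomposition} attached to a locally finite operator. First I would extend scalars to the algebraic closure $\overline{K}$: a derivation (resp.\ $\mathcal{E}$-derivation) of $A$ stays one on $A_{\overline K}=A\otimes_K\overline K$, local finiteness is preserved, and I would check that the Mathieu property descends from $A_{\overline K}$ to $A$, so that it suffices to argue over $\overline K$. On each finite-dimensional invariant subspace a locally finite operator admits a Jordan--Chevalley decomposition, and these patch to a global splitting; for a derivation this gives $D=D_s+D_n$ with $D_s$ semisimple, $D_n$ locally nilpotent, the two commuting, and both again derivations.

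For the derivation case the semisimple part yields a grading $A_{\overline K}=\bigoplus_{\lambda}W_\lambda$ by eigenvalues of $D_s$, and the Leibniz rule forces $W_\lambda W_\mu\subseteq W_{\lambda+\mu}$ while $D$ preserves each $W_\lambda$. On $W_\lambda$ with $\lambda\neq 0$ the operator $D=\lambda\,\id+D_n$ is bijective (a finite Neumann series inverts it on each element, since $D_n$ is locally nilpotent), so $W_\lambda\subseteq\IM D$; consequently $\IM D=\bigl(\bigoplus_{\lambda\neq 0}W_\lambda\bigr)\oplus D_n(W_0)$, where on the subalgebra $W_0$ (the generalized $0$-eigenspace of $D_s$, which contains $1$) the derivation acts as the locally nilpotent $D_n$. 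Verifying the Mathieu property then reduces to the following: given $a$ with $a^m\in\IM D$ for all $m\geq 1$ and an arbitrary $b$, control the $W_0$-component of $ba^m$ and force it into the proper subspace $D_n(W_0)$ for all large $m$.

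The $\mathcal{E}$-derivation case runs in exact parallel, with additive weights replaced by multiplicative ones. Writing $\delta=\id-\phi$ with $\phi$ a locally finite algebra endomorphism, I would decompose $A_{\overline K}=\bigoplus_{\mu}V_\mu$ into generalized $\mu$-eigenspaces of $\phi$; because $\phi$ is a homomorphism, $\phi(vw)=\phi(v)\phi(w)$ gives $V_\mu V_\nu\subseteq V_{\mu\nu}$, so the identity eigenvalue $\mu=1$ now plays the role that $0$ played for $D$. On $V_\mu$ with $\mu\neq 1$ the map $\delta$ equals $(1-\mu)\,\id$ minus the nilpotent part of $\phi$ and is invertible, whence $\bigoplus_{\mu\neq 1}V_\mu\subseteq\IM\delta$, and everything again funnels into the unipotent core $V_1$ (a subalgebra containing $1$), on which $\phi=\id+(\text{locally nilpotent})$ and $\delta$ is the associated locally nilpotent $\mathcal{E}$-derivation.

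The main obstacle is exactly this core. After the reduction, the task is to show that on $W_0$ (resp.\ $V_1$) the image of a locally nilpotent derivation (resp.\ $\mathcal{E}$-derivation), enlarged by the free summand $\bigoplus_{\lambda\neq 0}W_\lambda$ (resp.\ $\bigoplus_{\mu\neq 1}V_\mu$), absorbs all high products $ba^m$. For two-variable polynomial algebras this is tractable because locally nilpotent and locally finite derivations of $K[x,y]$ are classified up to conjugacy, which is precisely how Theorem~\ref{thewz} and its $\mathcal{E}$-derivation analogue in this paper are established; but for an arbitrary $K$-algebra no such structure theorem or finiteness is available, the multiplicative-versus-additive weight bookkeeping across the decomposition is delicate, and the weight-zero (resp.\ unipotent) component of $a^m$ cannot be governed in general. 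For these reasons I expect the reduction above to be the most one can do uniformly, and the full conjecture over arbitrary $K$-algebras to remain open, accessible only algebra by algebra.
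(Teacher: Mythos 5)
The statement you were asked to prove is the LFED Conjecture itself, which the paper only \emph{states}: it offers no proof of the general case and indeed proves only the bivariate polynomial instance (Theorem~\ref{thewz} for derivations, due to van den Essen, Wright and Zhao, and Theorem~\ref{thmain} for $\mathcal{E}$-derivations, the paper's contribution). Your assessment that the general conjecture is open is therefore exactly right, and declining to claim a proof is the correct move; your submission should be judged as a reduction strategy, and as such it is sound. The individual steps check out: base change to $\overline{K}$ with descent of the Mathieu property is legitimate (it is the paper's Lemma~\ref{lemtensor}, quoted from~\cite{vanden2010}); the generalized eigenspace decomposition of a locally finite operator over $\overline{K}$, the multiplicativity $W_\lambda W_\mu\subseteq W_{\lambda+\mu}$ (resp.\ $V_\mu V_\nu\subseteq V_{\mu\nu}$, which follows by intertwining multiplication with $\phi\otimes\phi$ and expanding $\phi\otimes\phi-\mu\nu\,\id$ as a sum of two commuting locally nilpotent summands), and the invertibility of $D$ on $W_\lambda$, $\lambda\neq 0$ (resp.\ of $\delta$ on $V_\mu$, $\mu\neq 1$) via a finite Neumann series are all standard and correct, yielding $\IM D=\bigl(\bigoplus_{\lambda\neq 0}W_\lambda\bigr)\oplus D_n(W_0)$ and its multiplicative analogue. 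You also correctly locate the obstruction: everything funnels into the locally nilpotent (resp.\ unipotent) core, where no general structure theory exists.

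It is worth noting how differently the paper proceeds on the one case it does settle. Rather than your uniform weight decomposition, it works concretely: Lemma~\ref{LFEDC} reduces from general $K$ to $\mathbb{C}$ (through finitely generated subfields embedded in $\mathbb{C}$ --- a sharper form of your base-change step, going down rather than up), then Theorem~\ref{lem4forms} classifies all locally finite endomorphisms of $\mathbb{C}[x,y]$ up to conjugation into seven explicit classes, building on Friedland--Milnor, Furter and Maubach, including non-invertible $\phi$ via the factorization $\phi=\mu\nu$ of Lemma~\ref{fenjie}. The hardest class (Theorem~\ref{lem4forms}(\ref{lem4forms4}), where $\phi(y)=by$ with $b$ a root of unity) has its image computed exactly as $C+\langle y^{s}p(y^{r})\rangle$ in Lemma~\ref{lastform}, and the MZ property is then verified in Theorem~\ref{C+p} by showing a weak radical is nil, using the Chinese remainder theorem, Newton polygons, and a discrete Fourier transform argument in the finite-dimensional quotient $K[y]/\langle y^r-1\rangle$. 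Your abstract decomposition would, in this two-variable setting, essentially reproduce the easy classes (where eigenvalues are not roots of unity and $\IM\delta=\langle x,y\rangle$) but would still leave precisely the root-of-unity/unipotent cores, which is where all of the paper's technical work (Sections 3--4) lives. So your reduction buys generality and a clean conceptual frame; the paper's classification buys completeness in dimension two by brute-force control of exactly the core your reduction cannot handle. Both are consistent, and your conclusion --- that beyond such case-by-case analysis the conjecture remains open --- matches the state of the art reflected in the paper's introduction.
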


Recently, van den Essen and Zhao~\cite{vanden2018} found that both the case of  derivations and the case of $\mathcal{E}$-derivations of the LFED conjecture for Laurent polynomial algebras imply a remarkable theorem of Duistermaat and van der
Kallen~\cite{duikal}. This theorem states that the subspace consisting of Laurent polynomials
without constant terms is a Mathieu subspace of $\mathbb{C}[X,X^{-1}]$, where
$X$ denotes the $n$ variables $x_{1},x_2\dots,x_{n}$ and $X^{-1}$ denotes
$x_{1}^{-1},x_2^{-1},\dots,x_{n}^{-1}$.
The case of derivations of the LFED conjecture for Laurent polynomial algebras was proved earlier by Zhao~\cite{zhao20172}, using the theorem of Duistermaat and van der
Kallen.

The LFED conjecture has been proved for some special cases.
Zhao proved the LFED conjecture for finite dimensional algebras
~\cite{zhao20171} and for algebraic derivations and $\mathcal{E}$-derivations of
integral domains of characteristic zero~\cite{zhao2018}. The LFED conjecture
was also established for the Laurent polynomial algebra in one or two variables by
Zhao~\cite{zhao20172}, the field $K(X)$ of rational functions, the formal
power series algebra $K[[X]]$ and the Laurent formal power series algebra
$K[[X]][X^{-1}]$ by van den Essen and Zhao~\cite{vanden2018}.

The LFED conjecture for polynomial algebras is the most interesting case, but
few results are known. The LFED conjecture for the univariate polynomial algebra
$K[x]$ was proved by Zhao~\cite{zhao2017}. For $K[x,y]$, the case of
derivations was proved by van den Essen et al.\@ (Theorem~\ref{thewz}), but the case of $\mathcal{E}%
$-derivations remained unknown. For polynomial algebras in three variables, the
conjecture was verified for some locally nilpotent derivations
~\cite{liu2019,sun2021} and linear derivations and $\mathcal{E}$-derivations
~\cite{tian}. For general $K{[X]}$, the conjecture was proved for diagonal
derivations and $\mathcal{E}$-derivations, and for monomial-preserving derivations in~\cite{vanden2010,vanden2017}.

The aim of this paper is to present an analogue of Theorem~\ref{thewz} for
$\mathcal{E}$-derivations of $K[x,y]$ by proving the following result.

\begin{theorem}\label{thmain}Let $K$ be a field of characteristic $0$ and let $\delta$ be
any locally finite $K$-$\mathcal{E}$-derivation of $K[x,y]$. Then $\IM \delta$
is a Mathieu subspace of $K[x,y]$.
\end{theorem}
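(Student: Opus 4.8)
The plan is to recast the statement in terms of the algebra endomorphism $\phi:=\id-\delta$ of $K[x,y]$ and then to run a case analysis over the possible structures of $\phi$. First I would record two reductions. Since $\id$ and $\phi$ commute and preserve the same finite-dimensional orbit spaces, $\delta$ is locally finite if and only if $\phi$ is locally finite; hence $\phi$ is a locally finite $K$-algebra endomorphism. Moreover, for any automorphism $\alpha$ of $K[x,y]$ the conjugate $\alpha\delta\alpha^{-1}=\id-\alpha\phi\alpha^{-1}$ is again a locally finite $K$-$\mathcal{E}$-derivation with $\IM(\alpha\delta\alpha^{-1})=\alpha(\IM\delta)$, and the Mathieu-subspace property is preserved by algebra automorphisms. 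I may therefore replace $\phi$ by any conjugate and work with a convenient normal form.

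Next I would classify locally finite endomorphisms $\phi$ by the transcendence degree $r$ of the $\phi$-stable image subalgebra $\IM\phi=K[\phi(x),\phi(y)]$. When $r=0$ the map $\phi$ is evaluation at a point $(a,b)$, so $\IM\delta=\{f-f(a,b)\}$ is the maximal ideal of polynomials vanishing at $(a,b)$; since every ideal is a Mathieu subspace, this case is immediate. When $r=2$ the polynomials $\phi(x),\phi(y)$ are algebraically independent and $\phi$ is injective; here $\phi$ is automatically an automorphism, because injectivity forces $\phi$ to act bijectively on each finite-dimensional invariant subspace $V_f=\spn\{\phi^{n}(f):n\ge0\}$, whence $f\in\IM\phi$ for every $f$. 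For such an automorphism I would invoke the Jordan decomposition $\phi=\phi_s\phi_u$ into commuting semisimple and unipotent locally finite automorphisms. Writing the unipotent factor as $\phi_u=\exp(D)$ for a locally nilpotent derivation $D$, the key computation is
\begin{equation*}
\id-\exp(D)=-D\circ U,\qquad U=\sum_{n\ge0}\frac{D^{n}}{(n+1)!},
\end{equation*}
with $U$ a well-defined invertible locally finite operator, so that $\IM(\id-\phi_u)=\IM D$ is a Mathieu subspace by Theorem~\ref{thewz}; after a diagonalising conjugation the semisimple factor is governed by the diagonal $\mathcal{E}$-derivation results of~\cite{vanden2010,vanden2017}.

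The remaining case $r=1$, in which $\phi(x)$ and $\phi(y)$ are algebraically dependent but not both constant, is where I expect the main difficulty. Here $\IM\phi$ lies in a transcendence-degree-one subalgebra of $K[x,y]$ on which $\phi$ restricts to a locally finite endomorphism, and the plan is to normalise the generator of that subalgebra (via an Abhyankar--Moh type argument) so as to reduce the analysis of $\IM\delta$ to the univariate situation settled by Zhao's result for $K[x]$~\cite{zhao2017}, while controlling the transverse direction separately. The genuine obstacle is twofold. First, in the $r=2$ case one must recombine the semisimple and unipotent contributions: the image to be understood is $\IM(\id-\phi_s\phi_u)$, not the separate images $\IM(\id-\phi_s)$ and $\IM(\id-\phi_u)$, so the clean reductions above have to be fused by a careful joint normal form respecting the grading fixed by $\phi_s$. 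Second, the degenerate $r=1$ case produces an $\IM\delta$ that is neither an ideal nor a diagonal image, and there the Mathieu-subspace property must be verified essentially by hand from its defining condition. Once each of the finitely many normal forms is checked, the theorem follows by the conjugation-invariance reduction.
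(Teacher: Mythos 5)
Your preliminary reductions are correct: passing to the endomorphism $\phi=\id-\delta$, conjugation invariance of the MZ-property, the fact that a locally finite injective endomorphism of $K[x,y]$ is an automorphism (the argument via finite-dimensional invariant subspaces is exactly right), the identity $\id-\exp(D)=-D\circ U$ with $U$ invertible, and the observation that in the $r=0$ case $\IM\delta$ is a maximal ideal. But the proposal stops precisely where the theorem becomes hard, and you say so yourself. The decisive gap is the ``mixed'' automorphism case: when the Jordan parts $\phi_s$ and $\phi_u$ are both nontrivial, nothing you have set up says anything about $\IM(\id-\phi_s\phi_u)$ --- knowing that $\IM(\id-\phi_u)=\IM D$ is an MZ-subspace (via Theorem~\ref{thewz}) and that the semisimple factor is covered by diagonal results does not combine, since the image of a product is not assembled from the images of its factors. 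In the paper's normal form this is exactly the class $\phi(x)=b^{s}x+y^{s}p(y^{r})$, $\phi(y)=by$ with $b$ a primitive $r$th root of unity (Theorem~\ref{lem4forms}(4)): such $\phi$ is neither semisimple nor unipotent (its $r$th power is unipotent), and handling it is the core of the paper. Lemma~\ref{lastform} computes $\IM\delta=C+\langle y^{s}p(y^{r})\rangle$ with $C=\spn_{K}\{x^{i}y^{j}\mid r\nmid si+j\}$, and Theorem~\ref{C+p} proves that this subspace --- which is neither an ideal, nor the image of a diagonal map, nor the image of any derivation --- is an MZ-subspace; the proof requires the weak-radical machinery, the Chinese remainder theorem, Lemma~\ref{ZHAO idem} on finite-dimensional algebras, and a discrete Fourier transform computation to exclude idempotents. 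No step in your outline substitutes for this, so as written the proposal does not prove the theorem.

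The other admitted gap, the $r=1$ case, is fillable along the lines you indicate and is less serious. After the Abhyankar--Moh (epimorphism theorem) normalization one may assume $\phi(y)=0$ and $\phi(x)=\lambda_{1}x+\lambda_{2}+yg$, exactly as in the paper's proof of Theorem~\ref{lem4forms}; one then checks that $\delta(x^{m}y^{n})=x^{m}y^{n}$ for $n>0$, so $\langle y\rangle\subseteq\IM\delta$, and modulo $\langle y\rangle$ the image becomes the image of a locally finite univariate $\mathcal{E}$-derivation, so Zhao's result for $K[x]$~\cite{zhao2017} together with the quotient criterion (Lemma~\ref{ZHAO M/I}) finishes it --- this is in substance the paper's cases (5)--(7), where the root-of-unity subcase still needs an argument like Lemma~\ref{lemyk}. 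Two further points you should not leave implicit: the Jordan decomposition of a locally finite automorphism into commuting \emph{algebra} automorphisms, and the linearizability of the semisimple part, are statements over $\mathbb{C}$ (or an algebraically closed field), so your argument needs the reduction from general $K$ of characteristic $0$ to $\mathbb{C}$ that the paper carries out in Lemma~\ref{LFEDC}.
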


Theorem~\ref{thewz} and~\ref{thmain} together confirm the LFED conjecture for
polynomial algebras in two variables.

Throughout this paper $K$ denotes a field of characteristic $0$, and $X$
denotes the $n$ variables $x_{1},x_2,\dots,x_{n}$.

The rest of this paper is devoted to proving Theorem~\ref{thmain}. In Section 2, we
classify the locally finite endomorphisms of $\mathbb{C}[x,y]$ into seven
classes under conjugation.
The plan is to prove Theorem~\ref{thmain} by means of showing it
for the corresponding seven classes of $\mathcal{E}$-derivations. In Section 3, we
deal with the most complex case: the fourth class. In Section 4, we first reduce the LFED conjecture
of $K[X]$ to that of $\mathbb{C}[X]$. Then we finish the proof of Theorem~\ref{thmain} by examining the seven classes of $\mathcal{E}$-derivations individually.

\section{Classification of locally finite endomorphisms of $\mathbb{C}[x,y]$}

Let $F= (F_{1}, F_2,\dots, F_{n} )$ be a polynomial endomorphism of the affine
space $K^{n}$. Then there is a unique endomorphism $F^*$ of the polynomial
algebra $K[X]$ such that $F^*(x_{i})=F_{i}$ for $i=1,2,\dots,n$. Polynomial
endomorphisms of $K^{n}$ correspond one-to-one with endomorphisms of $K[X]$
under $F\mapsto F^*$, and ${(F\circ G)}^*=G^*\circ F^* $ for all
polynomial endomorphisms $F,G$ of $K^{n}$ (see~\cite{essenbk}).

A $K$-linear map $\phi$ of a $K$-vector space $V$ is called locally finite if
for each $v\in V$ the subspace generated by $\{\phi^{i}(v)\mid i\in \mathbb{N}\}$ is
finite dimensional~\cite[Definition 1.3.5(i)]{essenbk}.

According to~\cite[Definition 1.4 and Theorem 1.1]{fur2007} a polynomial
endomorphism $F$ of the affine space ${K}^{n}$ is locally finite if and only
if the endomorphism $F^*$ of $K[X]$ is locally finite.

By a result of Friedland and Milnor~\cite{Fri89}, Furter~\cite{fur1999} proved
that each locally finite polynomial automorphism of $\mathbb{C}^{2}$ is conjugate
to a triangular automorphism. Maubach~\cite[Lemma 2.16]{mau2015} classified,
up to conjugation by triangular automorphisms, the triangular automorphisms of
$K^{2}$ into two classes: affine and sequential.

Based on~\cite{fur1999,fur2007,mau2015}, we will classify the locally finite
endomorphisms of $\mathbb{C}[x,y]$ under conjugation into seven classes for
our purpose. We work with $\mathbb{C}$ in this section, thought some results
are valid for arbitrary fields of characteristic zero.

Denote by $\mathbb{N}$ the set of nonnegative integers and by $\mathbb{N}^{*}$
the set of positive integers. Let $K^*=K\setminus \{0\}$ for any field $K$.

\begin{theorem}\label{lem4forms} Let $\phi$ be a locally finite endomorphism of
$\mathbb{C}[x,y]$. Then up to conjugation $\phi$ satisfies one of the
following conditions:
\begin{enumerate}
\item $\phi(x)=bx$ and $\phi(y)=ay$, for $a,b\in\mathbb{C}^{*}$;\label{lem4forms1}

\item $\phi(x)=bx$ and $\phi(y)=y+1$, for $b\in\mathbb{C}^{*}$;\label{lem4forms2}

\item $\phi(x)=b^{s}x+ay^{s}$ and $\phi(y)=by$, where $s\in\mathbb{N}^{*}$,
$a\in\mathbb{C}$, $b\in\mathbb{C}^{*}$, and $b$ is not a root of unity;\label{lem4forms3}

\item $\phi(x)=b^{s}x+y^{s}p(y^{r})$ and $\phi(y)=by$, where $r\in
\mathbb{N}^{*}$, $s\in\mathbb{N}$, $b$ is a primitive $r$th root of unity, and
$p(y)\in\mathbb{C}[y]$ is monic;\label{lem4forms4}

\item $\phi^{2}=\phi^{3}$;\label{lem4forms5}

\item $\phi(x)=\lambda x+yg$ and $\phi(y)=0$, for $\lambda\in
\mathbb{C}^{*}$ and $g\in\mathbb{C}[x,y]$;\label{lem4forms6}

\item $\phi(x)=x+\lambda+yg$ and $\phi(y)=0$, for $\lambda\in
\mathbb{C}^{*}$ and $g\in\mathbb{C}[x,y]$.\label{lem4forms7}
\end{enumerate}
\end{theorem}

To prove the theorem, we need to generalize~\cite[Lemma 4.4]{fur2007} by removing
the assumption $F(0)=0$. Our proof follows that of~\cite[Lemma 4.4]{fur2007}.

\begin{lemma}\label{fenjie} Let $\phi$ be a locally finite endomorphism of $\mathbb{C}[x,y]$ that is not invertible. Then there exist homomorphisms $\mu:\mathbb{C}[z]\rightarrow\mathbb{C}[x,y]$ and $\nu:\mathbb{C}[x,y]\rightarrow\mathbb{C}[z]$ such that $\phi=\mu\nu$ and $\nu\mu(z)=az+b$ for some $a,b\in\mathbb{C} $.
\end{lemma}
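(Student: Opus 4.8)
The plan is to reduce the required factorization to a single transparent condition on $\phi$ and then to produce the relevant polynomial. First I would observe that giving homomorphisms $\mu\colon\mathbb{C}[z]\to\mathbb{C}[x,y]$ and $\nu\colon\mathbb{C}[x,y]\to\mathbb{C}[z]$ with $\phi=\mu\nu$ is the same as exhibiting a single $g\in\mathbb{C}[x,y]$ with $\IM\phi\subseteq\mathbb{C}[g]$. Indeed, $\mu$ is determined by $g:=\mu(z)$, and $\phi=\mu\nu$ forces $\phi(x),\phi(y)\in\mathbb{C}[g]$, hence $\IM\phi=\mathbb{C}[\phi(x),\phi(y)]\subseteq\mathbb{C}[g]$; conversely, once a nonconstant $g$ is found (so $g$ is transcendental over $\mathbb{C}$) one writes $\phi(x)=P(g)$ and $\phi(y)=Q(g)$ with unique $P,Q\in\mathbb{C}[z]$ and sets $\nu(x)=P(z)$, $\nu(y)=Q(z)$. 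A short computation then gives $\nu\mu(z)=R(z)$, where $R\in\mathbb{C}[z]$ is the unique polynomial with $\phi(g)=R(g)$ (such $R$ exists since $\phi(g)\in\IM\phi\subseteq\mathbb{C}[g]$). Thus the statement reduces to two tasks: (i) find a nonconstant $g$ with $\IM\phi\subseteq\mathbb{C}[g]$ (the case $\IM\phi\subseteq\mathbb{C}$ being trivial), and (ii) check $\deg R\le1$.

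Second, task (ii) is essentially free once (i) holds, and this is precisely where the hypothesis $F(0)=0$ of Furter can be dropped. Since $\phi$ is a $\mathbb{C}$-algebra homomorphism fixing constants, $\phi^{k}(g)=R^{\circ k}(g)$ for all $k$, so $\deg\phi^{k}(g)=(\deg R)^{k}\deg g$. Local finiteness of $\phi$ bounds $\{\deg\phi^{k}(g)\}_{k}$, which forces $\deg R\le1$; hence $R(z)=az+b$ and $\nu\mu(z)=az+b$. In Furter's setting $F(0)=0$ is used to anchor an eigenvector at the fixed origin and produce the homogeneous relation $\phi(g)=ag$; here I instead read the relation off the forced bound $\deg R\le1$, which yields the affine relation $az+b$ with no control on, and no need to kill, the constant $b$.

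Third, I would handle the reduction to a one-dimensional image. Since $\phi$ is not invertible it is not injective: were it injective, then for the finite-dimensional $\phi$-invariant subspace $V_{x}=\spn\{\phi^{i}(x)\mid i\in\mathbb{N}\}$ the restriction $\phi|_{V_{x}}$ would be an injective, hence bijective, linear operator, giving $x\in\IM\phi$; likewise $y\in\IM\phi$, so $\phi$ would be surjective and thus an automorphism, a contradiction. Hence $\ker\phi\ne0$, so $\phi(x),\phi(y)$ are algebraically dependent and $A:=\IM\phi$ is a finitely generated domain of transcendence degree one having no nonconstant units, being a subalgebra of $\mathbb{C}[x,y]$.

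Finally comes task (i), which is the main obstacle and the part that genuinely follows Furter. The fraction field $\mathbb{C}(A)$ is a transcendence-degree-one subfield of the rational field $\mathbb{C}(x,y)$, hence is itself rational, $\mathbb{C}(A)=\mathbb{C}(g_{0})$. The integral closure $\tilde{A}$ of $A$ in $\mathbb{C}(A)$ is the coordinate ring of a smooth affine rational curve, i.e.\ $\mathbb{P}^{1}$ with finitely many punctures; I would show that local finiteness forces exactly one puncture, so that $\tilde{A}=\mathbb{C}[g]$ for some $g$. Granting this, every element of $\tilde{A}$ is integral over $A\subseteq\mathbb{C}[x,y]$ and lies in $\mathbb{C}(x,y)$, so by integral closedness of $\mathbb{C}[x,y]$ one gets $g\in\mathbb{C}[x,y]$ and $\IM\phi=A\subseteq\tilde{A}=\mathbb{C}[g]\subseteq\mathbb{C}[x,y]$, completing (i). The hard point is the puncture count: I expect to use the induced locally finite self-map $\phi|_{A}$ on $A$ (available since $\phi(A)=\IM\phi^{2}\subseteq A$) together with the degree bounds from local finiteness to rule out two or more points at infinity, exactly as in the proof of Furter's Lemma~4.4, the only change throughout being the bookkeeping of constant terms noted above.
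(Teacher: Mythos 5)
Your overall architecture is sound and genuinely different from the paper's, but as written there is one real gap: the ``puncture count'' in your last paragraph is asserted, not proved, and the mechanism you propose for it is misdirected. The paper's proof is short because it quotes two black boxes: Furter's Proposition~1.1 (a locally finite, non-invertible endomorphism of $\mathbb{C}[x,y]$ has $J(\phi(x),\phi(y))=0$) and Nowicki's theorem that $J(f,g)=0$ forces $f=v_{1}(u)$, $g=v_{2}(u)$ for a common $u\in\mathbb{C}[x,y]$. Your task~(i) is exactly the content of that second citation, so you are in effect re-proving a known theorem --- which is fine --- but you stop short of its crux. Your stated plan, to rule out two or more punctures ``using the induced locally finite self-map $\phi|_{A}$ \dots exactly as in the proof of Furter's Lemma~4.4,'' does not correspond to what that proof does: the argument the paper follows goes through the vanishing Jacobian and Nowicki's factorization and involves no puncture count, and local finiteness of $\phi|_{A}$ has no evident bearing on the geometry of the curve with coordinate ring $\tilde{A}$. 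So the decisive step of task~(i) is missing.

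Fortunately the gap closes using an observation you already made: subalgebras of $\mathbb{C}[x,y]$ have no nonconstant units. Since $\tilde{A}\subseteq\mathbb{C}[x,y]$ (by your integral-closedness argument), any $u\in\tilde{A}$ invertible in $\tilde{A}$ satisfies $u,u^{-1}\in\mathbb{C}[x,y]$, hence $u\in\mathbb{C}^{\ast}$. But if the smooth affine rational curve corresponding to $\tilde{A}$ were $\mathbb{P}^{1}$ minus $k\geq 2$ points $P_{1},\dots,P_{k}$, then choosing a coordinate on $\mathbb{P}^{1}$ with $P_{1}=0$ and $P_{2}=\infty$ exhibits a regular function on the curve whose only zero and only pole lie at the removed points, i.e.\ a nonconstant unit of $\tilde{A}$ --- a contradiction. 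Hence $k=1$, $\tilde{A}=\mathbb{C}[g]$ with $g\in\mathbb{C}[x,y]$, and $\IM\phi\subseteq\mathbb{C}[g]$, completing task~(i) with no use of local finiteness at all. With this repair your proof is correct and is a genuinely self-contained alternative to the paper's: your linear-algebra argument that local finiteness plus non-invertibility forces $\ker\phi\neq 0$ (injectivity would make $\phi$ bijective on each finite-dimensional invariant subspace, putting $x$ and $y$ in $\IM\phi$) neatly replaces the citation of Furter's Proposition~1.1, and the L\"uroth/integral-closure/units argument replaces the citation of Nowicki; the remaining degree-bound step, $\deg\phi^{k}(g)=(\deg R)^{k}\deg g$ forcing $\deg R\leq 1$, is identical to the paper's.
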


\begin{proof}
By~\cite[Proposition 1.1]{fur2007} the Jacobian determinant $J(\phi
(x),\phi(y))=0$. By~\cite[Theorem 1.4]{now1988}, there exist $v_{1},v_{2}
\in\mathbb{C}[z]$ and $u\in\mathbb{C}[x,y]$ such that $\phi(x)=v_{1}(u)$ and
$\phi(y)=v_{2}(u)$. Let $\mu$ be the homomorphism from $\mathbb{C}[z]$ to
$\mathbb{C}[x,y] $ defined by $\mu(z)=u$, and let $\nu$ be the homomorphism
from $\mathbb{C}[x,y]$ to $\mathbb{C}[z] $ defined by $\nu(x)=v_{1}(z)$ and
$\nu(y)=v_{2}(z)$. Then $\mu\nu(x)=\mu(v_{1}(z))=v_{1}(\mu(z))=v_{1}
(u)=\phi(x)$. Similarly, $\mu\nu(y)= \phi(y)$. Hence $\phi=\mu\nu$.

Let $\nu\mu(z)=f(z)$.  Then $f(z)=u(v_1(z),v_2(z))$, and
\[
\phi^{n}(u)=\phi^{n}\mu(z)=\mu{(\nu\mu)}^{n}(z)=\mu(f^{n}(z))=f^{n}(u),
\]
where $f^n$ denotes the polynomial composition of $f$ with itself $n$ times. Suppose that $\deg f(z)>1$.
Then $\deg u(x,y)\geq1$ and $\deg\phi^{n}(u)={(\deg f(z))}^{n}\deg u $ for all
$n\in{\mathbb{N}}^*$, which implies that ${\{\deg\phi^{n} (u) \}}_{n\geq1}$ is unbounded. But that is not possible: since $\phi$ is locally finite, ${\{\deg\phi^{n} (u) \}}_{n\geq1}$ must be bounded. Therefore $\deg f(z)\leq1$, and so $\nu\mu(z)=f(z) =az+b$ for some $a,b\in\mathbb{C}$.
\end{proof}

We conclude this section with the proof of Theorem~\ref{lem4forms}.

\begin{proof}
[Proof of Theorem~\ref{lem4forms}]If $\phi$ is an automorphism,   $\phi$ is
conjugate to one of (1)--(4) by~\cite[Lemma 2.16]{mau2015}.

If $\phi$ is not invertible, then we have $\phi=\mu\nu$ and $\nu\mu(z)=az+b$
for some $a,b\in\mathbb{C}$, as in Lemma~\ref{fenjie}.

If $a=0$, then ${(\nu\mu)}^{2}=\nu\mu$. Thus $\phi^{3}=\mu{(\nu\mu)}^{2}\nu
=\mu(\nu\mu)\nu=\phi^{2}$. This is case (5).

If $a\neq0$, then $\nu\mu$ is an automorphism of $\mathbb{C}[z]$, which
implies $\nu$ is an epimorphism. Let $\pi:\mathbb{C}[x,y]\rightarrow
\mathbb{C}[z]$ be the epimorphism defined by $\pi(x)=z,~\pi(y)=0$. By
~\cite[Epimorphism theorem]{ab1975}, there exists an automorphism
$\delta:\mathbb{C}[x,y]\rightarrow\mathbb{C}[x,y]$ such that $\pi=\nu\delta$.
Let $\psi=\delta^{-1}\phi\delta$. Then
\[
\psi(y)=\delta^{-1}\phi\delta(y)=\delta^{-1}\mu\nu\delta(y)=\delta^{-1}\mu
\pi(y)=0.
\]
Write $\psi(x)=f+yg$ for some $f\in\mathbb{C}[x]$ and $g\in\mathbb{C}[x,y]$.
Then $\pi\psi^{n}(x)=f^{n}(z)$ for all $n\in\mathbb{N}$. Since $\psi$ is
locally finite, ${\{\deg\pi\psi^{n}(x)\}}_{n\geq 1}$ is bounded,
which implies $\deg f\leq1$. Thus $f=\lambda_{1}x+\lambda_{2}$ for some
$\lambda_{1} , \lambda_{2}\in\mathbb{C}$. Therefore,
\[
\psi(x)=\lambda_{1}x+\lambda_{2}+yg\ \text{and}\ \psi(y)=0,
\]
for $\lambda_{1},\lambda_{2}\in\mathbb{C}$ and $g\in\mathbb{C}[x,y]$.

If $\lambda_{1}=0$, then $\psi^{2}(x)=\lambda_2=\psi^{3}(x)$ and so $\phi^{2}=\phi^{3}$.
This is case (5).

Suppose that $\lambda_{1}\neq0$. We will distinguish several cases.

If $\lambda_{2}=0 $, then case (6) applies.

If $\lambda_{2}\ne0 $ and $\lambda_{1}=1 $, then case (7) applies.

To complete the proof, it only remains to consider the case $\lambda_{2}\ne0 $
and $\lambda_{1}\neq1$. Define the automorphism $\eta$ of $\mathbb{C}[x,y]$ by
$\eta(x)=x+{(1-\lambda_{1})}^{-1}\lambda_{2}$ and $\eta(y)=y. $ Then $\eta
\psi\eta^{-1}(x) =\lambda_{1}x+yg(x+{(1-\lambda_{1})}^{-1}\lambda_{2},y) $ and
$\eta\psi\eta^{-1}(y) =0 $. So case (6) applies.
\end{proof}

\section{Theorem \ref{thmain} for $\delta = \id - \phi$ where $\phi$ satisfies \ref{lem4forms}(\ref{lem4forms4})}

In this section, we first determine the image of $\delta = \id - \phi$ where $\phi$ satisfies \ref{lem4forms}(\ref{lem4forms4}). After that, we prove that this image is a Mathieu subspace of $\mathbb{C}[x,y]$. Furthermore, we generalize the second result to arbitary fields $K$ of characteristic $0$.

For $\beta =(\beta _{1},\beta _{2},\dots ,\beta _{n})\in
\mathbb{N}^{n}$, write $X^{\beta }$
for $x_{1}^{\beta _{1}}x_{2}^{\beta _{2}}\cdots x_{n}^{\beta _{n}}$.
Denote by $\LT(f)$ the leading term of   $f\in K[X]\setminus \{0\}$ with respect to a fixed  monomial well-ordering on $K[X]$ (see~\cite{Cox}).

\begin{lemma}\label{lemmaeta} Let $S$ be a subspace of $K[X]$ spanned by monomials in
$K[X]$ and $\eta:S\rightarrow K[X]$ be a $K$-linear map such that
$\eta(S)\subseteq S$. If
\begin{equation}\label{eqLT}
   \text{for all}~X^{\beta}\in S, \LT(\eta(X^{\beta}))=c_{\beta}X^{\beta}~\text{for some}~c_{\beta}\in K^{\ast}
\end{equation}
with respect to a fixed monomial well-ordering in $K[X]$, then $S=$ $\eta(S)$.
\end{lemma}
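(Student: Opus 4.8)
The plan is to show that the map $\eta$ is surjective onto $S$ by a leading-term / triangularity argument. The hypothesis \eqref{eqLT} says that with respect to the fixed monomial ordering, $\eta$ sends each basis monomial $X^\beta$ of $S$ to a polynomial whose leading term is a nonzero scalar multiple of $X^\beta$ itself. In other words, $\eta$ is ``upper triangular'' with nonzero diagonal entries relative to the monomial basis ordered by the monomial ordering. Intuitively such a map should be invertible on $S$, hence surjective; the content of the lemma is to make this precise when $S$ (and possibly the index set of monomials) is infinite-dimensional, so that one cannot simply invoke finite-dimensional linear algebra.

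\medskip

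\emph{First} I would fix notation: let $\mathcal{M}$ denote the set of monomials $X^\beta$ spanning $S$, so that $\mathcal{M}$ is a basis of $S$, and note that by hypothesis $\eta(X^\beta)\in S$ for every $X^\beta\in\mathcal{M}$. Since $\eta(S)\subseteq S$ it suffices to prove $S\subseteq\eta(S)$, i.e.\ that every monomial $X^\beta\in\mathcal{M}$ lies in the image $\eta(S)$; because $\eta(S)$ is a subspace, this will give $S\subseteq\eta(S)$ and hence equality. \emph{Second}, I would argue by well-founded induction on the monomial ordering. The key point is that a monomial ordering on $K[X]$ is a well-ordering on $\mathbb{N}^n$ (this is part of the definition, via Dickson's lemma), so every nonempty set of monomials has a least element, which licenses induction downward from each monomial.

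\medskip

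For the \emph{inductive step}, fix $X^\beta\in\mathcal{M}$ and assume, as induction hypothesis, that every monomial $X^\gamma\in\mathcal{M}$ with $X^\gamma\prec X^\beta$ already lies in $\eta(S)$. By \eqref{eqLT} we may write
\[
\eta(X^\beta)=c_\beta X^\beta + \sum_{X^\gamma\prec X^\beta} d_{\beta,\gamma}X^\gamma,
\]
where the sum is finite and, crucially, every monomial $X^\gamma$ appearing on the right-hand side lies in $S$ (as $\eta(X^\beta)\in S$ and $S$ is spanned by monomials) and satisfies $X^\gamma\prec X^\beta$. Each such $X^\gamma$ is therefore a monomial of $\mathcal{M}$ strictly below $X^\beta$, so by the induction hypothesis $X^\gamma\in\eta(S)$. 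Rearranging,
\[
c_\beta X^\beta=\eta(X^\beta)-\sum_{X^\gamma\prec X^\beta} d_{\beta,\gamma}X^\gamma,
\]
and since $c_\beta\in K^\ast$ and the right-hand side is a finite $K$-linear combination of elements of $\eta(S)$, we conclude $X^\beta\in\eta(S)$. This completes the induction, giving $\mathcal{M}\subseteq\eta(S)$ and hence $S=\eta(S)$.

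\medskip

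The \emph{main obstacle} to watch is the well-foundedness issue: because $S$ may be infinite-dimensional, the argument genuinely relies on the fact that a monomial ordering is a well-ordering, so that no infinite strictly descending chain of monomials exists and the downward induction terminates. I would make sure to state this explicitly and cite the defining properties of monomial orderings in~\cite{Cox}. A secondary point to verify carefully is that every monomial $X^\gamma$ occurring in $\eta(X^\beta)$ really does belong to $S$ (not merely to $K[X]$); this follows because $\eta(X^\beta)\in\eta(S)\subseteq S$ and $S$ is spanned by monomials, so that the monomial support of any element of $S$ is contained in $\mathcal{M}$. With these two points secured, the triangularity plus well-ordering yields surjectivity immediately.
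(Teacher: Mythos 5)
Your proof is correct and is essentially the paper's argument: the paper phrases it as a minimal-counterexample argument (choosing the least monomial in $S\setminus\eta(S)$ with respect to the well-ordering and deriving a contradiction), which is just the contrapositive packaging of your well-founded induction, with the identical key steps of triangularity from \eqref{eqLT} and solving for $X^{\beta}$ in terms of $\eta(X^{\beta})$ and strictly smaller monomials already in the image.
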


\begin{proof}
Suppose that, on the contrary, $S\setminus\eta(S)$ is not empty.
Then we  choose $X^{\alpha_{0}}$ being the least element in $S\setminus \eta(S)$  with respect
to a monomial well-ordering in $K[X]$. Since $\eta(S)\subseteq S$,  we can write
$\eta(X^{\alpha_{0}})=a X^{\alpha_{0}}+\sum_{i=1}^{m}a_{i}X^{\alpha_{i}}$ with $a \in
K$,  $a_{i}\in K^{\ast}$, and distinct $X^{\alpha_0},X^{\alpha_1},\dots,X^{\alpha_m}\in S$.  By (\ref{eqLT}),  we have $a\ne 0$ and $X^{\alpha_{0}}>X^{\alpha_{i}}$ for all
$1\leq i\leq m$. Then $X^{\alpha_{i}}\in\eta(S)$ for all $1\leq i\leq m$ by
the minimality of $X^{\alpha_{0}}$. Therefore, $X^{\alpha_{0}}=a^{-1}%
(\eta(X^{\alpha_0})-\sum_{i=1}^{m}a_{i}X^{\alpha_{i}})\in\eta(S)$, a contradiction.
\end{proof}

For a subset $S$ of a $K$-algebra $A$, denote by $\spn_{K} S $ and $\langle S\rangle_A$ the subspace and the ideal generated by $S$, respectively. We write $\langle S\rangle$ instead of $\langle S\rangle_{\mathbb{C}[x,y]}$.

\begin{lemma} \label{lastform1}
Let $\phi$ be the endomorphism in Theorem~\ref{lem4forms}(\ref{lem4forms4})
and $\delta=\id-\phi$.
Then
\begin{equation}\label{eqidinim}
\langle y^{s}p(y^{r}) \rangle \subseteq \IM\delta.
\end{equation}
\end{lemma}

\begin{proof}
By induction on $i$, one can show that $\phi^i (x) = b^{si}x + i b^{s(i-1)} y^s p(y^r)$ and $\phi^i (y) = b^i y$. So if we define $q = r b^{-s} y^s p(y^r)$, then
$$
\phi^r(x) = x + q~\text{and}~\phi^r(y) = y.
$$
Let $\delta' = \id - \phi^r$. Then $\delta' = \delta(\id+\phi+\phi^2+\cdots+\phi^{r-1})$, so $\IM \delta' \subseteq \IM\delta$. Therefore, it suffices to show that $\langle y^{s}p(y^{r})\rangle \subseteq \IM\delta'$.

Define a linear map $\eta:\langle x \rangle\rightarrow\mathbb{C}[x,y]$ by $\eta(x^{m}
y^{n})=x{q}^{-1}\delta'(x^{m}y^{n})$ for all $x^{m}y^{n}\in \langle x \rangle
$. Then
\[
\eta(x^{m}y^{n})=\frac{x}{q}\Big(x^my^n-\big(x+q\big)^my^n\Big)
=-\sum_{i=1}^{m}\binom{m}{i}x^{m-i+1}q^{i-1}y^n,
\]
where the sum is zero whenever its lower limit is bigger than its upper limit. This implies that $\eta(\langle x \rangle) \subseteq \langle x \rangle$, and that $\LT\big(\eta(x^{m}y^{n})\big) = -mx^m y^n$ with respect to the lex order in $\mathbb{C}[x,y]$. Thus by Lemma~\ref{lemmaeta},
$\eta(\langle x \rangle) = \langle x \rangle$. Hence
$$
\delta' (\langle x \rangle) = q x^{-1} \eta(\langle x \rangle) = q x^{-1} \langle x \rangle = \langle y^{s}p(y^{r}) \rangle,
$$
which gives the desired result.
\end{proof}

\begin{corollary} \label{lastform}
Let $\phi$ be the endomorphism in Theorem~\ref{lem4forms}(\ref{lem4forms4})
and $\delta=\id-\phi$.
Then
\begin{equation}\label{eqim=c+}
   \IM\delta=C+\langle y^{s}p(y^{r})\rangle,
\end{equation}
where $C=\spn_{\mathbb{C}}\{x^{m}y^{n}\mid m,n\in\mathbb{N}\text{ and }r\nmid ms+n\}$.
\end{corollary}

\begin{proof}
For any $m,n\in\mathbb{N}$, a direct computation shows that
$$
\delta(x^{m}y^{n}) \equiv 
(1-b^{ms+n})x^{m}y^{n} \pmod{\langle y^s p(y^r) \rangle}.
$$
Since $1 - b^{ms+n} = 0$, if and only if $r \mid ms + n$, we infer from lemma \ref{lastform1} that $\IM \delta$ is as given.
\end{proof}

So we have determined the image of $\delta = \id - \phi$ where $\phi$ satisfies \ref{lem4forms}(\ref{lem4forms4}). We advance with proving that this image is a Mathieu subspace of $\mathbb{C}[x,y]$. For that purpose, we first formulate and prove two lemmas.

Write $\cfx{f}{i}$ for the coefficient of $x^i$ of an element of $f \in K[x,y]$, viewed as polynomial over $K[y]$. Let $\bar{K}$ be an algebraic closure of $K$.

\begin{lemma} \label{lastform2}
Let $r$ be a positive integer, $s$ a nonnegative integer, and $p(y)\in K[y]$.
Suppose that $f \in K[x,y]$, such that $f^n \notin \langle y^s p(y^r) \rangle_{K[x,y]}$ for all $n \in \mathbb{N}$. Then there exist a factor $q$ of $y^sp(y^r)$, and an $i \in \mathbb{N}$, such that the following holds. Either $q = y$ or $q = y^r - \lambda$ for some $\lambda \in \bar{K}^*$, and
$$
q \nmid \cfx{f}{i} ~\text{and}~ q \mid \cfx{f^j}{ij} - \big({\cfx{f}{i}}\big)^j ~\text{for all $j > 0$}.
$$
\end{lemma}

\begin{proof}
Suppose first that $f(x,\alpha) = 0$ for all roots $\alpha \in \bar{K}$ of $y^sp(y^r)$. Take $j$ such that the multiplicity of every root of $y^sp(y^r)$ is bounded from above by $j$. Then $f^j \in \langle y^sp(y^r) \rangle$. Contradiction.

Suppose next that $f(x,\alpha) \ne 0$ for some root $\alpha \in \bar{K}$ of $y^sp(y^r)$. Then we can choose a factor $q$ of $y^sp(y^r)$ with this root $\alpha$, such that either $q = y$ or $q = y^r - \lambda$ for some $\lambda \in \bar{K}^*$. As $f(x,\alpha) \ne 0 = q(\alpha)$, there exists $i$ such that $q \nmid \cfx{f}{i}$. Take $i$ maximal (or minimal) as such. If we compute $\cfx{f^j}{k}$ modulo $q$ for all $k \ge ij$ ($k \le ij$) by induction on $j$ by way of discrete convolution, then we obtain that $\cfx{f^j}{k} \equiv 0 \pmod{q}$ for all $k > ij$ ($k < ij$) and $\cfx{f^j}{ij} \equiv \big(\cfx{f}{i}\big)^j \pmod{q}$, for all $j > 0$.
\end{proof}

\begin{lemma} \label{lastform34}
Let $r$ be a positive integer. Take $S = \{y,y^2,\ldots,y^{r-1}\}$, and suppose that $\lambda \in K$ and $h \in K[y]$, such that $h^j \in \spn_{K} S + \langle y^r - \lambda \rangle_{K[y]}$ for all $j > 0$. Then $h^r \in \langle y^r - \lambda \rangle_{K[y]}$.
\end{lemma}

\begin{proof}
An arbitrary element of $S$ is of the form $y^i$, where $0 < i < r$. Furthermore, the matrix of the multiplication by $y^i$ modulo $y^r - \lambda$ as a $K$-linear map, with respect to the basis $\mathcal{B}$ consisting of the residue classes of $1,y,y^2,\ldots,y^{r-1}$, is
$$
\left(\begin{array}{cc}
0_{i,r-i} & \lambda I_{i}  \\
I_{r-i} & 0_{r-i,i}
\end{array} \right),
$$
i.e.\@ a matrix with trace $0$.

Let $M$ be the matrix of the multiplication by $h$ modulo $y^r - \lambda$, with respect to basis $\mathcal{B}$. Then for all $j > 0$, the matrix $M^j$ of the multiplication by $h^j$ modulo $y^r - \lambda$ (with respect to basis $\mathcal{B}$) has trace $0$.  But that means that $M^r = 0$. So $h^r \cdot 1 \in \langle y^r - \lambda \rangle_{K[y]}$.
\end{proof}

We finally prove that the image $C+\langle y^{s}p(y^{r})\rangle$ in Corollary \ref{lastform} is a Mathieu subspace of $\mathbb{C}[x,y]$.

\begin{theorem}\label{C+p} Let $r$ be a positive integer, $s$ a nonnegative integer, and
$p(y)\in K[y]$. Take
\[
C = \spn_{K}\{x^{m}y^{n}\mid m,n\in\mathbb{N}\text{ and }r \nmid ms + n\}.
\]
Then $C+\langle y^{s}p(y^{r})\rangle_{K[x,y]}$ is a Mathieu subspace of $K[x,y]$.
\end{theorem}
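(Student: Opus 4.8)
The plan is to quotient out the ideal and then recognize the resulting image as the ``non‑identity part'' of a finite group grading. Set $I=\langle y^{s}p(y^{r})\rangle$, $M=C+I$, and let $\phi\colon K[x,y]\to A:=K[x,y]/I$ be the canonical projection. Since $\ker\phi=I\subseteq M$, Lemma~\ref{ZHAO M/I} reduces the problem to showing that $\phi(M)$ is an MZ‑subspace of $A$. To describe $\phi(M)$, I equip $K[x,y]$ with the $\mathbb{Z}/r$‑grading in which $x$ has weight $s$ and $y$ has weight $1$, so that $x^{i}y^{j}$ has weight $si+j \bmod r$. Then $C=\bigoplus_{v\neq 0}K[x,y]_{v}$ is exactly the sum of the nonzero‑weight components, while the generator $y^{s}p(y^{r})$ is homogeneous of weight $s$; hence $I$ is a homogeneous ideal and the grading descends to $A=\bigoplus_{v}A_{v}$, where $A=R[x]$ with $R=K[y]/(y^{s}p(y^{r}))$. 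Because $M$ contains every $K[x,y]_{v}$ with $v\neq 0$ and has its weight‑$0$ part inside $I$, I obtain $\phi(M)=\bigoplus_{v\neq 0}A_{v}=:N$, the complement of the identity component $A_{0}$.

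Next I reduce to the case where $K$ contains a primitive $r$th root of unity $\zeta$. Writing $K'=K(\zeta)$, base change along $K\subseteq K'$ carries $C+I$ to the subspace $C'+I'$ of $K'[x,y]$ of exactly the same shape (same $p$, still with $p(0)\neq0$), since $K'\otimes_{K}(C+I)=(K'\otimes_{K}C)+(K'\otimes_{K}I)$. Thus, once the theorem is known over $K'$, Lemma~\ref{lemtensor} gives it over $K$. Assuming $\zeta\in K$, define the $K$‑algebra automorphism $\sigma$ of $A$ acting on $A_{v}$ as multiplication by $\zeta^{v}$ (well defined because $A_{v}A_{w}\subseteq A_{v+w}$). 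By orthogonality of characters the averaging operator $\pi_{0}=\tfrac1r\sum_{k=0}^{r-1}\sigma^{k}$ is precisely the projection of $A$ onto $A_{0}$, so that $N=\ker\pi_{0}$.

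Finally I show that $wr(N)$ is nil and invoke Lemma~\ref{nil}. Let $f\in wr(N)$, so that $f^{m}\in N$, i.e.\ $\pi_{0}(f^{m})=0$, for every $m\geq1$. Setting $f_{k}:=\sigma^{k}(f)$ and using that each $\sigma^{k}$ is an algebra homomorphism, this says that the power sums $p_{m}=\sum_{k=0}^{r-1}f_{k}^{m}$ of the $r$ commuting elements $f_{0},\dots,f_{r-1}$ all vanish. In characteristic $0$, Newton's identities then force the elementary symmetric functions $e_{1},\dots,e_{r}$ of the $f_{k}$ to vanish, whence $\prod_{k=0}^{r-1}(T-f_{k})=T^{r}$ in $A[T]$; evaluating at $T=f_{0}$ (the $k=0$ factor being zero) yields $f^{r}=f_{0}^{r}=0$. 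Thus every element of $wr(N)$ is nilpotent, so $N$ is an MZ‑subspace of $A$ by Lemma~\ref{nil}, and unwinding the two reductions shows that $M=C+\langle y^{s}p(y^{r})\rangle$ is an MZ‑subspace of $K[x,y]$. I expect the main obstacle to be exactly this passage from ``all power sums vanish'' to nilpotence: because $A=R[x]$ need not be reduced, one cannot argue inside a domain, and it is the Newton‑identity/characteristic‑polynomial manipulation—valid in any commutative $\mathbb{Q}$‑algebra—that both handles the nilpotents of $R$ and supplies the uniform nilpotence index $r$.
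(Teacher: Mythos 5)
Your proposal is correct, and it takes a genuinely different route from the paper's. Both arguments share the same outer scaffolding: pass to the quotient $A=K[x,y]/\langle y^{s}p(y^{r})\rangle$ via Lemma~\ref{ZHAO M/I}, reduce MZ-ness to nilness of the weak radical via Lemma~\ref{nil}, and enlarge the base field via Lemma~\ref{lemtensor}. From there the paper proceeds by decomposition: it assumes $K$ algebraically closed, factors $p$, and applies the Chinese remainder theorem to split $A$ into $K[x,y]/\langle y^{s}\rangle$ and factors $K[x,y]/{\langle y^{r}-a_{i}\rangle}^{n_{i}}$; the first is handled by a Newton-polygon argument (Lemma~\ref{lemyk}), the others by passing to the reduced ring $R[x]$ with $R=K[y]/\langle y^{r}-1\rangle$ and a leading-term argument that bottoms out in finite-dimensional MZ-theory and a discrete-Fourier-transform computation with idempotents (Lemmas~\ref{lemlt0}, \ref{lemlt} and \ref{ZHAO idem}). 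You replace all of this with one structural observation --- $C$ is the sum of the nonzero components of the $\mathbb{Z}/r$-grading with $x$ of weight $s$ and $y$ of weight $1$, and $y^{s}p(y^{r})$ is homogeneous, so the image of $C+\langle y^{s}p(y^{r})\rangle$ in $A$ is the kernel of the averaging projector $\pi_{0}=\frac{1}{r}\sum_{k=0}^{r-1}\sigma^{k}$ of the induced cyclic group action --- followed by the symmetric-function trick: vanishing of all power sums of the conjugates $\sigma^{k}(f)$ forces, by Newton's identities in a $\mathbb{Q}$-algebra, $\prod_{k=0}^{r-1}\bigl(T-\sigma^{k}(f)\bigr)=T^{r}$, hence $f^{r}=0$ upon evaluating at $T=f$. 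Each step checks out (homogeneity of the generator makes the ideal graded; the projector formula is the standard character-orthogonality computation; the Newton-identity deduction is valid in any commutative $\mathbb{Q}$-algebra, so non-reducedness of $A$ is harmless). Your route buys several things: it needs only $K(\zeta)$ rather than the algebraic closure; it yields the uniform nilpotence bound $f^{r}=0$ on the weak radical rather than mere nilness; the non-reduced factors require no special treatment; and it never uses the hypothesis $p(0)\neq0$, since $y^{s}p(y^{r})$ is homogeneous for the grading regardless, so you in fact prove a slightly more general statement. What the paper's longer route buys is reusable intermediate results: Lemma~\ref{lemyk}, for instance, is invoked again in Case 6 of the proof of Theorem~\ref{thmain}, so your argument streamlines this theorem but would not entirely supplant Section 4.
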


\begin{proof}
Suppose that $f \in K[x,y]$ such that $f^n \in C+\langle y^{s}p(y^{r})\rangle_{K[x,y]}$ for all $n > 0$. Suppose first that $f^N \in \langle y^s p(y^r) \rangle_{K[x,y]}$ for some $N \in \mathbb{N}$. Then
$$
g f^m \in \langle y^s p(y^r) \rangle_{K[x,y]} \subseteq C+\langle y^{s}p(y^{r})\rangle_{K[x,y]} ~\text{for all}~ g \in K[x,y] ~\text{and all}~ m \ge N,
$$
in agreement with Definition \ref{mz}.

Suppose next that $f^n \notin \langle y^s p(y^r) \rangle_{K[x,y]}$ for all $n \in \mathbb{N}$. We will derive a contradiction. Take $q \mid y^s p(y^r)$ and $i \in \mathbb{N}$ as in Lemma \ref{lastform2}.

Suppose first that $q = y$. Then $f^r \in C + \langle y \rangle_{K[x,y]}$, so
$$
\cfx{f^r}{ir} \in \spn_{K}\{y^n \mid n \in \mathbb{N}\text{ and }r \nmid irs + n\} + \langle y \rangle_{K[y]} = \langle y \rangle_{K[y]}.
$$
From Lemma \ref{lastform2}, it follows that $y \mid \big(\cfx{f}{i}\big)^r$ and $y \nmid \cfx{f}{i}$, which is impossible.

Suppose next that $q = y^r - \lambda$ for some $\lambda \in \bar{K}^{*}$. Let $S = \{y,y^2,\ldots,y^{r-1}\}$. Then $f^{jr} \in C + \langle  y^r - \lambda \rangle_{\bar{K}[x,y]}$, so
\begin{align*}
\cfx{f^{jr}}{ijr} &\in \spn_{\bar{K}}\{y^n \mid n \in \mathbb{N}\text{ and }r \nmid ijrs + n\} + \langle y^r - \lambda \rangle_{\bar{K}[y]} \\
&= \spn_{\bar{K}} S + \langle y^r - \lambda \rangle_{\bar{K}[y]}, ~\text{for all $j > 0$}.
\end{align*}
Let $h = \big(\cfx{f}{i}\big)^r$. From Lemma \ref{lastform2}, we deduce that $y^r - \lambda \mid \cfx{f^{jr}}{ijr} - h^j$ for all $j > 0$, so $h^j \in \spn_{\bar{K}} S + \langle y^r - \lambda \rangle_{\bar{K}[y]}$ for all $j > 0$. On account of Lemma \ref{lastform34}, $h^r \in \langle y^r - \lambda \rangle_{\bar{K}[y]}$. So
$$
y^r - \lambda \mid h^r = \big(\cfx{f}{i}\big)^{r^2}.
$$
As $\gcd\{y^r - \lambda, \allowbreak ry^{r-1}\} = 1$, it follows that $y^r - \lambda$ is square-free. So $y^r - \lambda \mid \cfx{f}{i}$. This contradicts Lemma \ref{lastform2}.
\end{proof}

The proof of Theorem \ref{C+p} works for characteristic $> r$ as well. Theorem \ref{C+p} holds in addition if $r$ is a power of the characteristic of $K$, because $f^r \bmod y^sp(y^r) \in K[x^r,y^r]$ in that case. But Theorem \ref{C+p} does not hold if $r$ has a factor $u$ such that $r/u-1$ is a multiple of the characteristic of $K$. A counterexample can be constructed by taking $f = -(y^u+y^{2u}+\cdots+y^{r-u})$, using the fact that $y^r - 1 \mid f^2 - f$ under the given conditions.

\section{Proof of Theorem~\ref{thmain}}

Let $A$ be a $K$-algebra. By a $K$-derivation of $A$ we mean a $K$-linear map
$D:A\rightarrow A$ satisfying
\begin{equation}
D(ab)=aD(b)+D(a)b ~\text{for all $a,b\in A$.}
\label{Leibniz}
\end{equation}
By a $K$-$\mathcal{E}$-derivation of $A$ we mean a $K$-linear map
$\delta:A\rightarrow A$ such that $\phi = \id-\delta$ is an algebra endomorphism of
$A$ (see~\cite{zhao2018}). Notice that for such $\delta$,
\begin{equation}
\delta(ab) = \delta(a)b+ a\delta(b) - \delta(a)\delta(b) ~\text{for all $a,b \in A$.} \label{ELeibniz}
\end{equation}
In literature,  $\mathcal{E}$-derivations are also called   skew
derivations or   $\phi$-derivations (with $\phi=\id-\delta$) (see~\cite{Bresar2002,Kharchenko1992}).

Without causing misunderstanding, we will write $\mathcal{E}$-derivations
instead of $K$-$\mathcal{E}$-derivations.

An $\mathcal{E}$-derivation $\delta$ of $A$ is locally finite if and only if the
associated endomorphism $\phi=\id-\delta$ is locally finite, because $\spn_K\{f,\delta(f),\delta^2(f),\ldots,\delta^j(f)\} =  \spn_K\{f,\phi(f),\phi^2(f),\ldots,\phi^j(f)\}$ for all $f \in A$ and all $j \in \mathbb{N}$.

Let $D$ be a derivation or $\mathcal{E}$-derivation of $K[X]$. If we combine the $K$-linearity of $D$ with \eqref{Leibniz} or \eqref{ELeibniz} respectively, then we can infer that $D$ is
uniquely determined by $D(x_1),D(x_2),\ldots,D(x_n)$, and that $D$ is locally finite if and only if
$\spn_{K}\{x_{i},D(x_{i}),D^{2}(x_{i}),\ldots\}$ is finite dimensional for all
$i=1,2,\ldots,n$.

%

\begin{lemma}\label{lemext} Let $L\subseteq K$ be a field extension, $D$ a $K$-$\mathcal{E}%
$-derivation (resp.\@ $K$-derivation) of $K[X]$ and $D_{L}$ an $L$-$\mathcal{E}%
$-derivation (resp.\@ $L$-derivation) of $L[X]$ such that $D(x_{i})=D_{L}(x_{i})
$ for $i=1,2,\ldots,n$.
\begin{enumerate}
\item $D$ is locally finite if and only if $D_{L}$ is locally finite;
\item If $\IM D$ is a Mathieu subspace of $K[X]$ then $\IM D_{L}$ is a
Mathieu subspace of $L[X]$.
\end{enumerate}
\end{lemma}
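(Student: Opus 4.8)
The plan is to first show that $D$ is nothing but the scalar extension of $D_L$ to $K[X]$, and then deduce both assertions from base-change considerations. Write $\phi=\id-D$ and $\phi_L=\id-D_L$ for the associated endomorphisms of $K[X]$ and $L[X]$ (in the derivation case one argues identically using the Leibniz rule). The restriction $\phi|_{L[X]}$ is an $L$-algebra homomorphism $L[X]\to K[X]$, and by hypothesis $\phi(x_i)=x_i-D(x_i)=x_i-D_L(x_i)=\phi_L(x_i)\in L[X]$ for every $i$. Since $\phi|_{L[X]}$ and $\phi_L$ are $L$-algebra homomorphisms agreeing on the generators $x_1,\dots,x_n$, they coincide; hence $\phi(L[X])\subseteq L[X]$ and $\phi|_{L[X]}=\phi_L$. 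Equivalently $D(L[X])\subseteq L[X]$ and $D|_{L[X]}=D_L$. In particular $D^{j}(x_i)=D_L^{j}(x_i)$ for all $i$ and $j$, and all of these elements lie in $L[X]$.

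For part (1) I would invoke the local finiteness criterion recalled above: $D$ (resp. $D_L$) is locally finite if and only if $\spn_{K}\{D^{j}(x_i)\mid j\in\mathbb{N}\}$ (resp. $\spn_{L}\{D_L^{j}(x_i)\mid j\in\mathbb{N}\}$) is finite dimensional for every $i$. By the previous paragraph the two spanning sets coincide and consist of elements of $L[X]$. The key point is then the purely linear-algebraic fact that, because the monomials $\{X^{\alpha}\}$ form a common basis of $L[X]$ over $L$ and of $K[X]$ over $K$, a finite family of elements of $L[X]$ is $L$-linearly independent if and only if it is $K$-linearly independent; indeed a homogeneous linear system with coefficients in $L$ admits a nontrivial solution over $K$ exactly when it admits one over $L$, since the rank of its coefficient matrix is unchanged under the extension $L\subseteq K$. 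Consequently $\dim_{L}\spn_{L}\{D_L^{j}(x_i)\}=\dim_{K}\spn_{K}\{D^{j}(x_i)\}$ for each $i$, so one side is finite precisely when the other is, which gives the equivalence.

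For part (2) I would identify $K[X]$ with $K\otimes_{L}L[X]$ via the shared monomial basis. As $D$ is $K$-linear, $\IM D=\spn_{K}\{D(X^{\alpha})\mid \alpha\in\mathbb{N}^{n}\}$, and because $D(X^{\alpha})=D_L(X^{\alpha})$ lies in $L[X]$, this equals the $K$-span of $\IM D_L=\spn_{L}\{D_L(X^{\alpha})\}$; that is, $\IM D=K\otimes_{L}\IM D_L$ as subspaces of $K\otimes_{L}L[X]=K[X]$. Applying Lemma~\ref{lemtensor} with $A=L[X]$ and $M=\IM D_L$, the hypothesis that $\IM D=K\otimes_{L}\IM D_L$ is an MZ-subspace of $K[X]=K\otimes_{L}L[X]$ yields that $\IM D_L$ is an MZ-subspace of $L[X]$, as required.

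No serious obstacle is expected. The one step that must be handled with care is the identity $D|_{L[X]}=D_L$, on which everything else rests; once this is established, part (1) becomes a base-change statement about dimensions and part (2) is a direct application of Lemma~\ref{lemtensor} after recognizing $\IM D$ as the scalar extension of $\IM D_L$.
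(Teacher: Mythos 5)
Your proof is correct and follows essentially the same route as the paper: both identify $K[X]$ with $K\otimes_{L}L[X]$, observe that $D$ is the scalar extension $\id_{K}\otimes D_{L}$ of $D_{L}$ (so the spans of the iterates $D^{j}(x_{i})$ correspond under base change and $\IM D=K\otimes_{L}\IM D_{L}$), and then deduce (1) from invariance of dimension under field extension and (2) from Lemma~\ref{lemtensor}. The only cosmetic differences are that you justify $D|_{L[X]}=D_{L}$ explicitly via agreement of homomorphisms on generators, and you phrase (1) through rank invariance where the paper simply writes the tensor identity.
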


\begin{proof}
We may assume that $K[X]=K\otimes_{L}L[X]$. Then $D=\id_{K}\otimes_{L} D_{L}$ and
$\IM D=K\otimes_{L}\IM D_{L}$. It follows that
\[
\spn_{K}\{x_{i},D(x_{i}),D^{2}(x_{i}),\dots\}=K\otimes_{L}\spn_{L}%
\{x_{i},D_{L}(x_{i}),D_{L}^{2}(x_{i}),\dots\}
\]
for $i=1,2,\ldots,n$. Thus $D$ is locally finite if and only if $D_{L}$ is
locally finite, and so (1) follows. (2) follows from Lemma~\ref{lemtensor} below.
\end{proof}

\begin{lemma}
\,\cite[Lemma 2.5]{vanden2010}\label{lemtensor} Let $L\subseteq K$ be a field
extension, $A$ an algebra over $L$, and $M$ an $L$-subspace of $A$. Assume
that $K\otimes_{L}M$ is a Mathieu subspace of the $K$-algebra $K\otimes_{L}A$.
Then $M$ is a Mathieu subspace of the $L$-algebra $A$.
\end{lemma}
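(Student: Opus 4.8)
The plan is to realize $A$ as an $L$-subalgebra of $K\otimes_{L}A$ via the embedding $a\mapsto 1\otimes a$, and to transport the MZ-subspace property of $K\otimes_{L}M$ down to $M$ through this embedding. The crucial preliminary observation I would establish is the identity
\[
(1\otimes A)\cap(K\otimes_{L}M)=1\otimes M,
\]
that is, an element of $A$ lies in $K\otimes_{L}M$ precisely when it already lies in $M$. Granting this, the MZ-subspace condition for $M$ follows by a formal transfer of data from $K\otimes_{L}A$.

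To prove the identity, I would use that a field extension $L\subseteq K$ makes $K$ a free $L$-module, so I can fix an $L$-basis $\{e_{i}\}_{i\in I}$ of $K$ with $e_{i_{0}}=1$ for some index $i_{0}$. Then $K\otimes_{L}A=\bigoplus_{i}(e_{i}\otimes A)$, and likewise $K\otimes_{L}M=\bigoplus_{i}(e_{i}\otimes M)$, the latter because tensoring the inclusion $M\hookrightarrow A$ with the free (hence flat) module $K$ keeps it injective and respects the basis decomposition. Comparing the components along $e_{i_{0}}=1$ yields the claim: if $1\otimes a=\sum_{i}e_{i}\otimes m_{i}$ with $m_{i}\in M$, then $a=m_{i_{0}}\in M$.

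With the identity in hand the verification is direct. Take any $a\in M$ with $a^{m}\in M$ for all $m>0$, and any $b\in A$. Then $1\otimes a\in K\otimes_{L}M$ and $(1\otimes a)^{m}=1\otimes a^{m}\in K\otimes_{L}M$ for all $m>0$, so $1\otimes a$ is a weak-radical element of $K\otimes_{L}M$ in the sense of the definition. Since $K\otimes_{L}M$ is an MZ-subspace of $K\otimes_{L}A$ and $1\otimes b\in K\otimes_{L}A$, there is an integer $n$ such that $1\otimes(a^{m}b)=(1\otimes a)^{m}(1\otimes b)\in K\otimes_{L}M$ for all $m>n$. By the identity above, $a^{m}b\in M$ for all $m>n$, which is exactly the MZ-subspace condition for $M$.

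I expect the only point requiring genuine care to be the identity $(1\otimes A)\cap(K\otimes_{L}M)=1\otimes M$: it rests on the freeness (equivalently flatness) of $K$ over $L$ together with the choice of a basis containing $1$, without which the intersection could in principle be strictly larger than $M$. Once that is secured, everything else is the routine transfer of the witnessing exponent $n$ across the embedding.
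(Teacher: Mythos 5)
Your proof is correct and complete: the key identity $(1\otimes A)\cap(K\otimes_{L}M)=1\otimes M$, established via an $L$-basis of $K$ containing $1$ and the flatness/freeness of $K$ over $L$, is exactly what is needed to pull the witnessing exponent $n$ back from $K\otimes_{L}A$ to $A$. Note that the paper itself gives no proof of this lemma (it is quoted from van den Essen, Wright and Zhao), and your argument is essentially the standard one from that cited source, so there is nothing further to reconcile.
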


In~\cite{vanden2018} the LFED conjecture of a $K$-algebra is reduced to the
case of a $\bar K$-algebra, where $\bar{K}$ is an algebraic closure of $K$. We
now reduce the LFED conjecture of $K[X]$ to the case of $\mathbb{C}[X]$.

\begin{lemma}\label{LFEDC} If the LFED conjecture holds for $\mathbb{C}[X]$, then it holds for $K[X]$ over any field $K$ of characteristic $0$.
\end{lemma}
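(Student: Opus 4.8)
The plan is to verify the Mathieu--Zhao condition for $\IM\delta$ (more generally, for the image of any locally finite $K$-derivation or $K$-$\mathcal{E}$-derivation $D$ of $K[X]$) one test-pair at a time, pushing each test down to a finitely generated subfield of $K$ that embeds into $\mathbb{C}$, where the hypothesis applies. So let $f\in\IM D$ satisfy $f^{m}\in\IM D$ for all $m>0$, and let $g\in K[X]$ be arbitrary; the goal is to produce an integer $N$ with $f^{m}g\in\IM D$ for all $m>N$. First I would let $L$ be the subfield of $K$ generated over $\mathbb{Q}$ by the finitely many coefficients of $D(x_{1}),\dots,D(x_{n})$, of $f$, and of $g$, so that $L$ is finitely generated over $\mathbb{Q}$ and $f,g\in L[X]$. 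Writing $D_{L}$ for the derivation (resp. $\mathcal{E}$-derivation) of $L[X]$ with $D_{L}(x_{i})=D(x_{i})$, Lemma~\ref{lemext}(1) shows $D_{L}$ is locally finite, and the identity $\IM D=K\otimes_{L}\IM D_{L}$ established in the proof of Lemma~\ref{lemext} will be the bridge between the two fields.

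The key localization step is the inclusion $\IM D\cap L[X]\subseteq\IM D_{L}$. I would prove it by choosing an $L$-basis of the $L$-subspace $\IM D_{L}$ and extending it to an $L$-basis of $L[X]$: with respect to this basis, membership in $K\otimes_{L}\IM D_{L}$ is detected coordinatewise, and an element of $L[X]$ has all its coordinates in $L$, so any element of $L[X]$ lying in $\IM D$ already lies in $\IM D_{L}$. Applying this to $f$ and to each power $f^{m}\in L[X]$ yields $f,f^{m}\in\IM D_{L}$, that is, $f$ lies in the weak radical $wr(\IM D_{L})$.

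Next I would transport the hypothesis across an embedding. Since $L$ is finitely generated over $\mathbb{Q}$ it has finite transcendence degree, and because $\mathbb{C}$ has infinite transcendence degree over $\mathbb{Q}$ there is a field embedding $\iota\colon L\hookrightarrow\mathbb{C}$ (send a transcendence basis to algebraically independent complex numbers and extend over the algebraic part). Let $D_{\mathbb{C}}$ be the derivation (resp. $\mathcal{E}$-derivation) of $\mathbb{C}[X]$ obtained by applying $\iota$ to the coefficients of the $D(x_{i})$; it is locally finite by Lemma~\ref{lemext}(1), so by the assumed LFED conjecture over $\mathbb{C}[X]$ its image $\IM D_{\mathbb{C}}$ is an MZ-subspace of $\mathbb{C}[X]$. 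Applying Lemma~\ref{lemext}(2) (equivalently Lemma~\ref{lemtensor}) to the extension $\iota(L)\subseteq\mathbb{C}$ shows $\IM D_{\iota(L)}$ is an MZ-subspace of $\iota(L)[X]$, and transporting back along $\iota^{-1}$ gives that $\IM D_{L}$ is an MZ-subspace of $L[X]$. Finally, feeding $f\in wr(\IM D_{L})$ and $g\in L[X]$ into the MZ-property of $\IM D_{L}$ produces $N$ with $f^{m}g\in\IM D_{L}\subseteq\IM D$ for all $m>N$, which is exactly what the MZ-condition for $\IM D$ requires.

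The main obstacle I anticipate is conceptual rather than computational: only the \emph{descent} direction of base change (Lemma~\ref{lemtensor}) is available, so one cannot simply ascend from $L$ to $K$. The device that resolves this is to test the MZ-condition for $\IM D$ against only a single pair $(f,g)$ at a time; this confines all the relevant data to a finitely generated subfield $L$, which is small enough to embed into $\mathbb{C}$ yet, through the localization identity $\IM D\cap L[X]=\IM D_{L}$, large enough to detect both $f\in wr(\IM D_{L})$ and the eventual membership $f^{m}g\in\IM D_{L}$. Verifying that localization identity carefully is the technical heart of the argument.
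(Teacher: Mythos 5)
Your proposal is correct and follows essentially the same route as the paper: restrict to the finitely generated subfield $L$ generated by the coefficients of $f$, $g$ and the $D(x_i)$, move $L$ into $\mathbb{C}$ by a field embedding, apply the hypothesis over $\mathbb{C}$, and descend via Lemma~\ref{lemext}(2). The only differences are cosmetic — you argue directly where the paper argues by contradiction — and your explicit verification of the localization identity $\IM D\cap L[X]\subseteq\IM D_{L}$ spells out a step the paper's proof uses but leaves implicit.
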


\begin{proof}
We prove this lemma by contraposition. Assume that $D$ is a counterexample to the LFED conjecture for $K[X]$. Then $D$ is either a locally finite $K$-derivation or a locally finite $K$-$\mathcal{E}$-derivation of $K[X]$, and $\IM D$ is not a Mathieu subspace of $K[X]$. So there exist $f,g\in K[X]$ and positive integers $m_{1}<m_{2}<\cdots$ such that $f^{m}\in\IM D$ for all $m>0$ and $f^{m_{i}}g\notin\IM D$ for all $i=1,2,\ldots$.

Let $L$ be the subfield of $K$ generated by the coefficients of $f,g$ and
$D(x_{i})$ for $i=1,2,\ldots,n$. Then the restriction of $D$ to $L[X]$,
denoted by $D_{L}$, is an $L$-($\mathcal{E}$\discretionary{-)}{}{-)}derivation of $L[X]$ since $D(x_{i})\in L[X]$. By the supposition, $\IM D_{L}$ is not a Mathieu subspace of
$L[X]$.

Since $L$ is a finitely generated extension of $\mathbb{Q}$, there
exist a subfield $L^{\prime}$ of $\mathbb{C}$ and an isomorphism $\sigma:L\to
L^{\prime}$ by~\cite[Lemma 1.1.13]{essenbk}. The field isomorphism
$\sigma$ can be extended to a ring isomorphism from $L[X]$ to $L^{\prime}[X]$
in a natural way, still denoted by $\sigma$, which is a semi-linear mapping
relative to $\sigma$. It follows that $D_{L^{\prime}}:=\sigma D_{L}\sigma
^{-1}$ is an $L^{\prime}$-($\mathcal{E}$\discretionary{-)}{}{-)}derivation of $L^{\prime}[X]$ and $\IM D_{L^{\prime}}$ is not a Mathieu subspace of $L^{\prime}[X]$.

There exists a
unique $\mathbb{C}$-($\mathcal{E}$\discretionary{-)}{}{-)}derivation of $\mathbb{C}[X]$, denoted by $D_{\mathbb{C}}$, such that $D_{\mathbb{C}}(x_{i})=D_{L^{\prime}}(x_{i})$  for $i=1,2,\ldots,n$. By Lemma~\ref{lemext}, $D_{\mathbb{C}}$ is locally finite just like $D$, and $\IM D_{\mathbb{C}}$ is not a Mathieu subspace of $\mathbb{C}[X]$. So the LFED conjecture does not hold for $\mathbb{C}[X]$.
\end{proof}

We conclude with the proof of Theorem~\ref{thmain}.

\begin{proof}
[Proof of Theorem~\ref{thmain}]By Lemma~\ref{LFEDC}, we may assume $K=\mathbb{C}$.  Let $\delta=\id-\phi$, where $\phi$ is an
endomorphism of $\mathbb{C}[x,y]$. The proof splits into seven cases according to Theorem~\ref{lem4forms}, due to the fact that $\IM\delta$ is a Mathieu subspace if and only if so is the image of $\sigma^{-1}\delta\sigma=\id-\sigma^{-1}\phi\sigma$ for any automorphism $\sigma$ of $\mathbb{C}[x,y]$.

Case (\ref{lem4forms1}). This case follows from~\cite[Corollary 4.4]{tian}.

Case (\ref{lem4forms2}). Since $1=\delta(-y)\in\IM\delta$, we can see that $\IM\delta$ is a Mathieu subspace of $\mathbb{C}[x,y]$ by~\cite[Proposition 1.4]{zhao20181}.

Case (\ref{lem4forms3}). For all $m,n\in\mathbb{N}$, we have
\[
\delta(x^{m}y^{n})=(1-b^{ms+n})x^{m}y^{n}-\sum_{i=1}^{m}\binom{m}{i}%
a^{i}b^{(m-i)s+n}x^{m-i}y^{is+n}.
\]
As $\delta(1) = 0$ and $s > 0$, $\IM\delta \subseteq \langle x,y  \rangle$ follows.

Assume that $m \ne 0$ or $n \ne 0$. 
Then $b^{ms+n}\neq1$, since $ms + n > 0$ and $b$ is not a root of unity. Fix the lex order in $\mathbb{C}[x,y]$. Then
\[
\LT(\delta(x^{m}y^{n}))=(1-b^{ms+n})x^{m}y^{n}.
\]
Since $\delta(\langle x,y  \rangle)\subseteq\langle x,y  \rangle$, by Lemma~\ref{lemmaeta} $\IM\delta=\langle x,y  \rangle$. Thus $\IM\delta$ is a Mathieu subspace of $\mathbb{C}[x,y]$.

Case (\ref{lem4forms4}). This case follows from Corollary~\ref{lastform} and Theorem~\ref{C+p}.

Case (\ref{lem4forms5}). This case follows from~\cite[Proposition 6.8]{zhao2018}.

Case (\ref{lem4forms6}). For $m,n\in\mathbb{N}$, not both zero, we have
\begin{numcases}{\delta(x^{m}y^{n})=}
      x^{m}y^{n}, & if $n>0$,\label{eq1}\\
     (1-\lambda^{m}) x^{m}+yf_{m}, &  if $m>0$ and $n=0$,\label{eqnumcase2}
    \end{numcases}
for some $f_m\in \mathbb{C}[x,y]$. As $\delta(1) = 0$, $\IM\delta \subseteq \langle x,y  \rangle$ follows. By (\ref{eq1}), we have
\begin{equation}\label{eqyinim}
   \langle y \rangle \subseteq\IM\delta.
\end{equation}

Suppose first that $\lambda$ is not a root of unity. Then   (\ref{eqnumcase2}) and (\ref{eqyinim}) yield  $x^{m}\in\IM\delta$ for all $m>0$.
Hence we get $\IM\delta=\langle x,y  \rangle$. In particular, $\IM\delta$ is a Mathieu subspace of $\mathbb{C}[x,y]$.

Suppose next that $\lambda$ is  an $r$th root of unity for some $r\in\mathbb{N}^*$. By induction on $i$, it follows that $\phi^i(x) = \lambda^i x + \lambda^{i-1}yg$ and $\phi^i(y) = 0$ for all $i \ge 1$. Hence $\phi^{r+1}(x) = \lambda x + y g$, and therefore $\phi^{r+1} = \phi$. So we can apply~\cite[Proposition 6.8]{zhao2018} again.

Case (\ref{lem4forms7}). Since
\begin{align*}
\delta\big({-\lambda^{-1}(x+yg)}\big)
&= -\lambda^{-1}(x+yg) + \lambda^{-1}\big(\phi(x)+\phi(y)\phi(g)\big) \\
&= -\lambda^{-1}(x+yg) + \lambda^{-1}(x+\lambda+yg) = 1,
\end{align*}
$\IM\delta$ is a Mathieu subspace of $\mathbb{C}[x,y]$ by~\cite[Proposition 1.4]{zhao20181}.
\end{proof}

\section*{Acknowledgements}

The authors are very grateful to Michiel de Bondt for making many valuable suggestions.

This work is supported by NSF of China (No. 12171194).

\bibliographystyle{abbrvnat}
\bibliography{ref}

\end{document}